\documentclass[a4paper,10pt, reqno]{amsart}
\textwidth= 5.00in
\textheight= 7.4in
\topmargin = 30pt
\evensidemargin=0pt
\oddsidemargin=55pt
\headsep=17pt
\parskip=.5pt
\parindent=12pt

\usepackage{amssymb,latexsym,amsmath,epsfig,amsthm} %% Add other packages as necessary 

\theoremstyle{plain}
\newtheorem{theorem}{Theorem}
\newtheorem*{theorem*}{Theorem}

\newtheorem*{corollary*}{Corollary}
\newtheorem{lemma}{Lemma}
\newtheorem*{lemma*}{Lemma}

\newtheorem*{proposition*}{Proposition}

\newtheorem*{conjecture*}{Conjecture}
\theoremstyle{definition}

\newtheorem*{definition*}{Definition}
\theoremstyle{remark}
\newtheorem{remark}{Remark}
\newtheorem*{remark*}{Remark}

\begin{document}
\title[A function defined by automaton with finite memory]{On one nearly everywhere continuous and  nowhere differentiable function, that defined by automaton with finite memory}
\author{Symon Serbenyuk}
\address{Institute of Mathematics \\
 National Academy of Sciences of Ukraine \\
  3~Tereschenkivska St. \\
  Kyiv \\
  01004 \\
  Ukraine}
\email{simon6@ukr.net}

\subjclass[2010]{Primary 26A27; Secondary 11B34, 11K55, 39B22}

% Key words
\keywords{ Nowhere differentiable function, automaton with a finite memory, fractal, functional equations.}

\begin{abstract}

This paper is devoted to the investigation of the following function
$$
f: x=\Delta^{3} _{\alpha_{1}\alpha_{2}...\alpha_{n}...}{\rightarrow}  \Delta^{3} _{\varphi(\alpha_{1})\varphi(\alpha_{2})...\varphi(\alpha_{n})...}=f(x)=y,
$$
where $\varphi(i)=\frac{-3i^{2}+7i}{2}$, $ i \in N^{0} _{2}=\{0,1,2\}$, and $\Delta^{3} _{\alpha_{1}\alpha_{2}...\alpha_{n}...}$ is the ternary  representation of  $x \in [0;1]$. That is values of this function are obtained from the ternary representation
of the argument by the following change of digits: 0 by 0, 1 by 2, and 2 by 1. This function preserves the ternary digit $0$.

Main mapping properties and differential, integral, fractal properties of the function are  studied. Equivalent representations by additionally defined auxiliary functions of this function are proved.

This paper is the paper translated from Ukrainian (the Ukrainian variant \cite{Symon 2012} available at 
  https://www.researchgate.net/publication/292970012). In 2012, the Ukrainian variant \cite{Symon 2012} of this paper was represented by the author in  the International Scientific Conference ``Asymptotic Methods in the Theory of Differential Equations" dedicated to 80th anniversary of M. I. Shkil (the conference paper available at https://www.researchgate.net/publication/301765319). In 2013, the investigations of the  present article were generalized by the author in the paper \cite{S. Serbenyuk functions with complicated local structure 2013} ``One one class of functions with complicated local structure" (https://arxiv.org/pdf/1601.06126.pdf) and in the several conference papers \cite{{S. Serbenyuk abstract 7}, {S. Serbenyuk abstract 8}} ( available at:  https://www.researchgate.net/publication/301765326, \\ https://www.researchgate.net/publication/303052308).

\end{abstract}
\maketitle

%% \thispagestyle{empty}

%%%%%%%%%%%%%%%%%%%%%%%%%%%%%%%%%%%%%%%%%%%%%%%%%%%%%%%%%%%%%%%%%%%%%%%%

\section{Introduction}

Fractal sets use often in the metric theory of numbers and the probability theory of numbers, and the investigations of the mathematical objects with complicated local structure as well. To investigate such sets in space $\mathbb R^2$, it is considered often these sets as graphs of functions that take $\mathbb R^1$ to $\mathbb R^1$.

This article is devoted to  one example of function with complicated local structure such that the argument and values of the function defined in terms of the ternary representation of real numbers.

\section{The Object of Research}

We shall
not consider numbers whose the  ternary representations have the period (2) (without the number $1$).  Consider a certain function  $f$ defined on $[0;1]$ by the following way: 
\begin{equation}
\label{ff1}
x=\Delta^{3} _{\alpha_{1}\alpha_{2}...\alpha_{n}...}\stackrel{f}{\rightarrow} \Delta^{3} _{\varphi(\alpha_{1})\varphi(\alpha_{2})...\varphi(\alpha_{n})...}=f(x)=y,
\end{equation}
where $\varphi(i)=\frac{-3i^{2}+7i}{2}$, $ i \in N^{0} _{2}=\{0,1,2\}$.

In this paper, we consider differential, integral, fractal and other properties of the function~$f$.

We begin with definitions of some  auxiliary functions. Let $i, j, k$ be pairwise distinct digits of the ternary number system.
First let us introduce a function $\varphi_{ij} (\alpha)$ defined on an alphabet of the ternary number system by the following:
\begin{center}
\begin{tabular}{|c|c|c|c|}
\hline
 &$ i$ &$ j $ & $k$\\
\hline
$\varphi_{ij} (\alpha)$ &$0$ & $0$ & $1$\\
\hline
\end{tabular}
\end{center}
That is  $f_{ij}$  is a function given on $[0;1]$ by the following way
$$
x=\Delta^{3} _{\alpha_{1}\alpha_{2}...\alpha_{n}...}\stackrel{f_{ij}}{\rightarrow} \Delta^{3} _{\varphi_{ij} (\alpha_{1})\varphi_{ij}  (\alpha_{2})...\varphi_{ij} (\alpha_{n})...}=f_{ij} (x)=y.
$$

\begin{remark} From the definition of $f_{ij}$ it follows that 
 $f_{01} =f_{10}$, $f_{02} =f_{20}$, $f_{12} =f_{21} $. Since it is true, we shall use only the following denotations:  $f_{01}, f_{02}, f_{12}$.
\end{remark}

\begin{lemma}
\label{lmf1}
The function $f$ can be represented by the following equivalent equalities:
\begin{enumerate}
\item
\begin{equation}
\label{ff2} 
f(x)=2x-3f_{01} (x), ~~\mbox{where}~~\Delta^{3} _{\alpha_{1}\alpha_{2}...\alpha_{n}...}\stackrel{f_{01}}{\rightarrow} \Delta^{3} _{\varphi_{01} (\alpha_{1})\varphi_{01} (\alpha_{2})...\varphi_{01} (\alpha_{n})...},
\end{equation}
$\varphi_{01} (i)=\frac{i^2 - i}{2}$, $i \in N^0 _2$;
\item
\begin{equation}
\label{ff3}
f(x)=\frac{3}{2}-x-3f_{12} (x), ~~\mbox{where}~~\Delta^{3} _{\alpha_{1}\alpha_{2}...\alpha_{n}...}\stackrel{f_{12}}{\rightarrow} \Delta^{3} _{\varphi_{12} (\alpha_{1})\varphi_{12} (\alpha_{2})...\varphi_{12} (\alpha_{n})...},
\end{equation}
$\varphi_{12} (i)=\frac{i^2 - 3i+2}{2}$, $i \in N^0 _2$.

\item
\begin{equation}
\label{ff4}
f(x)=\frac{x}{2}+\frac{3}{2}f_{02} (x), ~~\mbox{where}~~\Delta^{3} _{\alpha_{1}\alpha_{2}...\alpha_{n}...}\stackrel{f_{02}}{\rightarrow} \Delta^{3} _{\varphi_{02} (\alpha_{1})\varphi_{02} (\alpha_{2})...\varphi_{02} (\alpha_{n})...},
\end{equation}
$\varphi_{02} (i)=-i^2 +2i$, $i \in N^0 _2$.
\end{enumerate}
\end{lemma}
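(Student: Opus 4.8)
The plan is to reduce all three formulas to a single observation: each identity already holds at the level of the digit maps $\varphi,\varphi_{01},\varphi_{12},\varphi_{02}$, so that after expanding $f$ and the $f_{ij}$ into ternary series the statement follows by termwise linearity. First I would write, for $x=\Delta^{3}_{\alpha_1\alpha_2\ldots\alpha_n\ldots}$ taken in the ternary representation not of period $(2)$ (the one fixed in the setup), the expansions
\begin{equation*}
x=\sum_{n=1}^{\infty}\frac{\alpha_n}{3^n},\qquad f(x)=\sum_{n=1}^{\infty}\frac{\varphi(\alpha_n)}{3^n},\qquad f_{ij}(x)=\sum_{n=1}^{\infty}\frac{\varphi_{ij}(\alpha_n)}{3^n}.
\end{equation*}
Since the digits $\alpha_n$, and hence the images $\varphi(\alpha_n)$ and $\varphi_{ij}(\alpha_n)$, are uniformly bounded, every series converges absolutely and may be added, subtracted and rescaled termwise.

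Next I would verify the three \emph{pointwise} identities on the alphabet $N^{0}_{2}=\{0,1,2\}$, namely
\begin{equation*}
\varphi(i)=2i-3\varphi_{01}(i),\qquad \varphi(i)=3-i-3\varphi_{12}(i),\qquad \varphi(i)=\frac{i}{2}+\frac{3}{2}\varphi_{02}(i).
\end{equation*}
Each of these is in fact an exact algebraic identity in $i$: substituting $\varphi_{01}(i)=\tfrac{i^2-i}{2}$ gives $2i-3\cdot\tfrac{i^2-i}{2}=\tfrac{-3i^2+7i}{2}=\varphi(i)$; substituting $\varphi_{12}(i)=\tfrac{i^2-3i+2}{2}$ gives $3-i-3\cdot\tfrac{i^2-3i+2}{2}=\tfrac{-3i^2+7i}{2}=\varphi(i)$; and substituting $\varphi_{02}(i)=-i^2+2i$ gives $\tfrac{i}{2}+\tfrac{3}{2}(-i^2+2i)=\tfrac{-3i^2+7i}{2}=\varphi(i)$. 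Equivalently, one simply checks the three values $i=0,1,2$.

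Finally I would apply the $n$-th identity with $i=\alpha_n$, divide by $3^n$, and sum over $n\ge 1$. For the first relation this yields $f(x)=2x-3f_{01}(x)$ and for the third it yields $f(x)=\tfrac{x}{2}+\tfrac{3}{2}f_{02}(x)$ at once. For the second relation the constant term contributes $\sum_{n\ge1}3\cdot 3^{-n}=3\cdot\tfrac12=\tfrac32$, giving $f(x)=\tfrac32-x-3f_{12}(x)$; the only line needing separate justification here is the geometric sum $\sum_{n\ge1}3^{-n}=\tfrac12$.

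The algebra is routine, so the only real care goes into well-definedness. Because some $x$ admit two ternary expansions, I would make explicit that $f$ and the $f_{ij}$ are evaluated on the fixed representation with period $(2)$ excluded, so that the series above are unambiguous and the termwise manipulation is legitimate for every $x\in[0;1]$. Under that convention there is no genuine obstacle: the content of the lemma is entirely carried by the three polynomial identities, and everything else is bookkeeping on absolutely convergent series.
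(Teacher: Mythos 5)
Your proof is correct, but it takes a genuinely more direct route than the paper. You verify a single affine identity on the digit alphabet for each formula --- $\varphi(i)=2i-3\varphi_{01}(i)$, $\varphi(i)=3-i-3\varphi_{12}(i)$, $\varphi(i)=\tfrac{i}{2}+\tfrac{3}{2}\varphi_{02}(i)$, each an exact polynomial identity on $\{0,1,2\}$ --- and then sum termwise over the absolutely convergent ternary series, the geometric sum $\sum_{n\ge 1}3^{-n}=\tfrac12$ supplying the constant $\tfrac32$ in \eqref{ff3}. The paper instead proves \eqref{ff2} first by a structural argument: it splits $x$ into a sum of two numbers with digits in $\{0,1\}$, observes that $f(x)=2x$ on the Cantor-type set $C[3,\{0,1\}]$, and uses $\varphi(2)=\varphi(1)+\varphi(1)-3$ to identify the correction term supported on the positions of the digit $2$ as exactly $3f_{01}(x)$; it then bootstraps \eqref{ff3} from \eqref{ff2} via $f_{01}(x)=x-\tfrac12+f_{12}(x)$, and \eqref{ff4} by summing \eqref{ff2} and \eqref{ff3} and invoking $f_{01}(x)+f_{02}(x)+f_{12}(x)=\tfrac12$. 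Your method treats the three identities uniformly and independently, is shorter, and generalizes immediately: the same termwise argument proves the paper's later lemma that every permutation function $f_m$ has the form $a^{(ij)}_m x+b^{(ij)}_m+c^{(ij)}_m f_{ij}(x)$, since any map on a three-letter alphabet is interpolated by a quadratic and hence by an affine combination of $i$, $1$, and one $\varphi_{ij}$. What the paper's longer route buys is structural information reused elsewhere: the behavior $f(x)=2x$ on $C[3,\{0,1\}]$ hints at the self-affine features exploited in the integral and graph-dimension sections, and the relations among $f_{01},f_{02},f_{12}$ derived en route reappear in Lemma \ref{lmf2}. Your explicit handling of well-definedness --- fixing the representation that excludes period $(2)$ so the series are unambiguous --- matches the paper's standing convention and is the right caveat to make.
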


\begin{proof} It is easy to see that the ternary representation of each number 
 $x=\Delta^{3} _{\alpha_{1}\alpha_{2}...\alpha_{n}...}$ can be represented by the sum of two numbers 
 $\Delta^{3} _{\beta_{1}\beta_{2}...\beta_{n}...}   $  and  $\Delta^{3} _{\gamma_{1}\gamma_{2}...\gamma_{n}...}$ such that  $\beta_{n} \in N^0 _1$ and $\gamma_{n} \in N^0 _1$ for each $n \in \mathbb N$. Here $\alpha_n=2$ if and only if $\beta_n=\gamma_n=1$.

It is easily shown that   $f(x)=2x$ on the following set  
$$
C[3, \{0,1\}]=\{x: x=\Delta^{3} _{\alpha_{1}\alpha_{2}...\alpha_{n}...},~\alpha_{n} \in \{0,1\}\}
$$
This fact follows from the definition of  $f$.

Since $1=\varphi(2) \ne \varphi(1) + \varphi(1)=4$, we have
$\varphi(2)= \varphi(1) + \varphi(1)-3.$ 
Hence, 
$$
f(x)=f(x_1)+f(x_2)-3\Delta^{3} _{\underbrace{000...00}_{k_1 -1}1\underbrace{000...00}_{k_2-k_1-1}1\ldots \underbrace{000...00}_{k_n-(k_1+\dots+k_{n-1})-1}1\ldots}=
$$
$$
= 2x -3\Delta^{3} _{\underbrace{000...00}_{k_1 -1}1\underbrace{000...00}_{k_2-k_1-1}1...\underbrace{000...00}_{k_n-(k_1+...+k_{n-1})-1}1...} ,~ \mbox{where}
$$
$x=\Delta^{3} _{e_{1}e_{2}...e_{k_1-1}2e_{k_1+1}...e_{k_2-1}2e_{k_2+1}...e_{k_n-1}2e_{k_n+1}...}$, $e_k \in \{0,1\}$.
That is $k_n$ is the position  of the nth digit $2$ in the representation of $x$. 

The last-mentioned representation of the function $f$  and the following representation 
$$
f(x)=2x-3f_{01} (x), 
$$
where
$$
x=\Delta^{3} _{\alpha_{1}\alpha_{2}...\alpha_{n}...}\stackrel{f_{01}}{\rightarrow} \Delta^{3} _{\varphi_{01} (\alpha_{1})\varphi_{01} (\alpha_{2})...\varphi_{01} (\alpha_{n})...}=f_{01}(x)=y,
$$
$$
\varphi_{01}(\alpha_i)=\begin{cases}
0&\text{if $\alpha_i \in\{0,1\}$}\\
1&\text{if $\alpha_i =2$,}
\end{cases}
$$
i.e., $\varphi_{01} (i)=\frac{i^2 - i}{2}$ for $i \in N^0 _2$, are equivalent. 

It is easy to prove that  $f_{01} (x)=x - \Delta^3 _{111...}+f_{12} (x)=x-\frac{1}{2}+f_{12} (x)$, where
$$
x=\Delta^{3} _{\alpha_{1}\alpha_{2}...\alpha_{n}...}\stackrel{f_{12}}{\rightarrow} \Delta^{3} _{\varphi_{12} (\alpha_{1})\varphi_{12} (\alpha_{2})...\varphi_{12} (\alpha_{n})...}=f_{12}(x)=y,
$$
$\varphi_{12} (i)=\frac{i^2 - 3i+2}{2}, ~i \in N^0 _2.$

Hence,
$$
f(x)=2x-3f_{01} (x)=2x-3\left(x-\frac{1}{2}+f_{12} (x)\right)=\frac{3}{2}-x-3f_{12} (x).
$$

Since the following  relationship
$$
f_{01} (x)+f_{12} (x)+f_{02} (x)=\Delta^3 _{111...}=\frac{1}{2}
$$
holds for any  $x \in [0,1]$,  summing \eqref{ff2} and  \eqref{ff3}, we obtain
$$
2f(x)=x+\frac{3}{2}-3(f_{01} (x)+f_{12} (x))=x+\frac{3}{2}-3(\frac{1}{2}-f_{02} (x))=x+3f_{02} (x). 
$$
\end{proof}

\begin{lemma}
\label{lmf2}
The functions $f, f_{01},f_{02}, f_{12}$ have the following properties:
\begin{enumerate}
\item
$$
[0,1]\stackrel{f}{\rightarrow} \left([0,1] \setminus \{\Delta^3 _{\alpha_1\alpha_2...\alpha_n111...}\}\right) \cup\left\{\frac{1}{2}\right\};
$$
\item the point $x_0=0$ is a unique invariant point of the function  $f$;

\item the function $f$ is not bijective on a certain countable subset of $[0,1]$.
\item the following relationships hold for all $x \in [0,1]$:
\begin{equation}
\label{ff5}
f(x)-f(1-x)=f_{01} (x)-f_{12} (x),
\end{equation}
\begin{equation}
\label{ff6}
f(x)+f(1-x)=\frac{1}{2}+3f_{02} (x),
\end{equation}
\begin{equation}
\label{ff7}
f_{01} (x)+f_{02} (x)+f_{12} (x)=\frac{1}{2},
\end{equation}
\begin{equation}
\label{ff8}
2f_{01} (x)+f_{02} (x)=x,
\end{equation}
\begin{equation}
 \label{ff9}
f_{01} (x)- f_{12} (x)=x- \frac{1}{2};
\end{equation}

\item the function $f$ is not monotonic on the domain, in particular the function $f$ is a decreasing function on the set
$$
\{x: x_1<x_2 \Rightarrow (x_1=\Delta^3 _{c_1...c_{n_0}1\alpha_{n_0+2}\alpha_{n_0+3}...} \wedge  x_2=\Delta^3 _{c_1...c_{n_0}2\beta_{n_0+2}\beta_{n_0+3}...})\}, 
$$
 where $n_0 \in \mathbb Z^+$, $c_1, c_2,\dots, c_{n_0}$ is an ordered set of the ternary digits, $\alpha_{n_0+i} \in N^0 _2$,  $\beta_{n_0+i} \in N^0 _2$, $i \in \mathbb N$, 
and the function $f$ is an increasing function on the set
$$
\{x: x_1<x_2 \Rightarrow (x_1=\Delta^3 _{c_1...c_{n_0}0\alpha_{n_0+2}\alpha_{n_0+3}...} \wedge  x_2=\Delta^3 _{c_1...c_{n_0}r\beta_{n_0+2}\beta_{n_0+3}...})\}, 
$$
where $r \in \{1,2\}$.
\end{enumerate}
\end{lemma}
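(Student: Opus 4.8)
The plan is to reduce each claim to the digitwise action of the four maps together with the behaviour of the ternary expansion under the reflection $x\mapsto 1-x$, calling on Lemma~\ref{lmf1} whenever a relation between $f$ and an auxiliary function is needed. I would first record the tables: $\varphi$ interchanges the digits $1$ and $2$ and fixes $0$, while $\varphi_{01}(\alpha_n)$, $\varphi_{02}(\alpha_n)$, $\varphi_{12}(\alpha_n)$ are the indicators of $\alpha_n=2$, $\alpha_n=1$, $\alpha_n=0$ respectively. Consequently exactly one of the three indicators equals $1$ for each $n$, so summing the geometric series $\sum_n 3^{-n}=\tfrac12$ gives \eqref{ff7}; the pointwise identities $2\varphi_{01}(\alpha)+\varphi_{02}(\alpha)=\alpha$ and $\varphi_{01}(\alpha)-\varphi_{12}(\alpha)=\alpha-1$ then yield \eqref{ff8} and \eqref{ff9}. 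The only point requiring care is that for \eqref{ff8} the combined per-digit values stay inside $\{0,1,2\}$ so that no carries intervene; \eqref{ff7} and \eqref{ff9} are then plain termwise sums of absolutely convergent series.

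For \eqref{ff5} and \eqref{ff6} I would use $1-x=\Delta^3_{(2-\alpha_1)(2-\alpha_2)\ldots}$, valid because $1=\Delta^3_{222\ldots}$ and each $2-\alpha_n$ again lies in $\{0,1,2\}$. Since $2-\alpha_n=2$ precisely when $\alpha_n=0$ and $2-\alpha_n=1$ precisely when $\alpha_n=1$, the auxiliary functions obey $f_{01}(1-x)=f_{12}(x)$, $f_{12}(1-x)=f_{01}(x)$ and $f_{02}(1-x)=f_{02}(x)$. Inserting the first of these into the representation $f(x)=2x-3f_{01}(x)$ of Lemma~\ref{lmf1} gives $f(1-x)=2(1-x)-3f_{12}(x)$; adding this to \eqref{ff2} and using \eqref{ff7} produces \eqref{ff6}, while subtracting and invoking \eqref{ff9} produces \eqref{ff5}.

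The structural claims (1)--(3) I would treat through the same description. For (1), a value of the form $\Delta^3_{\alpha_1\ldots\alpha_n 111\ldots}$ could be produced only by an argument whose expansion terminates in the period $(2)$, and such arguments are excluded from the domain; the sole exception is $x=1=\Delta^3_{222\ldots}$, giving $f(1)=\Delta^3_{111\ldots}=\tfrac12$, which accounts exactly for the asserted image. For (2), the equation $f(x)=x$ forces $\varphi(\alpha_n)=\alpha_n$ for every $n$ in the (unique, by the period-$(2)$ exclusion) expansion, hence $\alpha_n=0$ and $x=0$. For (3) it suffices to display one collision and note that it recurs over a countable family: the two expansions $\tfrac13=\Delta^3_{1000\ldots}=\Delta^3_{0222\ldots}$ give $f(\tfrac23)=f(\tfrac16)=\tfrac13$ with $\tfrac23\ne\tfrac16$, and the same mechanism applies at every triadic rational, so $f$ is not injective on a countable set.

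For the monotonicity statement (5) I would compare the two arguments directly as real numbers. Writing $Q$ for the common contribution of the shared image prefix $\varphi(c_1)\ldots\varphi(c_{n_0})$ and bounding each tail by $0\le T\le 3^{-(n_0+1)}$, the decreasing case gives $f(x_1)-f(x_2)=3^{-(n_0+1)}+(T_1-T_2)\ge0$, because the swap sends the digits $1,2$ at position $n_0+1$ to $2,1$; the increasing case gives $f(x_2)-f(x_1)\ge0$, because the image digits there are $0$ and $\varphi(r)\in\{1,2\}$. Since one set forces $f$ down and the other forces it up, $f$ is not monotonic. I expect the main obstacle throughout to be the careful handling of the non-unique ternary expansions: the inequalities above degenerate to equalities exactly at the collision points of part (3) (where a tail of all $1$'s becomes a carrying tail of all $2$'s), so ``decreasing'' and ``increasing'' must be read in the non-strict sense, with strictness off that countable set. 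I would therefore isolate once, as a preliminary remark, the fact that the period-$(2)$ exclusion leaves each admissible number with a single usable expansion, rather than re-checking this in every part.
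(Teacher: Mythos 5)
Your proof is correct and rests on the same underlying technique as the paper's --- digitwise verification via the identities satisfied by $\varphi,\varphi_{01},\varphi_{02},\varphi_{12}$ --- but it is organized differently in three places worth recording. For \eqref{ff5}--\eqref{ff6} the paper computes $\varphi(i)-\varphi(2-i)=i-1$ and $\varphi(i)+\varphi(2-i)=1+3\varphi_{02}(i)$ directly and matches these against the digit identities for the auxiliary functions, whereas you first isolate the reflection relations $f_{01}(1-x)=f_{12}(x)$, $f_{12}(1-x)=f_{01}(x)$, $f_{02}(1-x)=f_{02}(x)$ and push them through the representation $f(x)=2x-3f_{01}(x)$ of Lemma~\ref{lmf1}; the two computations are equivalent, and both tacitly use the convention that $1-x$ carries the complemented expansion $\Delta^3_{(2-\alpha_1)(2-\alpha_2)\ldots}$, which at finite ternary-rationals (tails of $0$'s) is precisely the excluded period-$(2)$ representation --- so at those countably many points \eqref{ff5}--\eqref{ff6} hold representation-wise rather than value-wise, a caveat neither you nor the paper makes explicit, leaving you no worse off. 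For item (3) you exhibit the single collision $f(2/3)=f(1/6)=1/3$ and let it recur over the triadic rationals; this proves the literal statement (a countable set on which injectivity fails) with far less work, but it delivers strictly less than the paper's argument, which exhaustively analyzes the two possible ternary-rational image representations and classifies \emph{all} collisions into the four families $G_1,\dots,G_4$, thereby showing the collision set is exactly this countable union (in particular that ternary-irrational values have unique preimages); if the lemma is read in that stronger sense you would still need that case analysis. For item (5) your explicit tail bound $f(x_1)-f(x_2)\ge 3^{-(n_0+1)}+(T_1-T_2)\ge 0$ is a genuine sharpening rather than a deviation: you correctly identify that equality occurs exactly at the collision pairs of item (3) (e.g.\ $x_1=\Delta^3_{c_1\ldots c_{n_0}1000\ldots}$, $x_2=\Delta^3_{c_1\ldots c_{n_0}2111\ldots}$), so monotonicity on the stated sets is only non-strict there --- the paper's proof, which compares only the digits at position $n_0+1$ and asserts strict inequalities per its stated definitions of increasing and decreasing, silently glosses over this; your version is the defensible one. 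Finally, your explicit arguments for items (1) and (2), including the observation that $f(1)=\Delta^3_{111\ldots}=\tfrac12$ accounts for the exceptional point of the range and that $f(x)=x$ forces $\alpha_n=0$ digitwise once the period-$(2)$ exclusion pins down the expansion, simply flesh out what the paper dismisses as immediate from \eqref{ff1}.
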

\begin{proof} \emph{The first and the second properties} of the function $f$ follows from \eqref{ff1}. 

Let us prove that \emph{ the third property} is true. Let $x_1=\Delta^3 _{\alpha_1\alpha_2...\alpha_n...}$ and $x_2=\Delta^3 _{\beta_1\beta_2...\beta_n...}$ be numbers such that $x_1 \ne x_2$. Find the following set 
$$
G=\{x: f(x_1)=f(x_2), x_1 \ne x_2\}.
$$
\begin{itemize}

\item Let $f(x_1)=f(x_2)=y_{1,2}$ be a ternary-irrational number. Then   
$$
\varphi (\alpha_{n})=\varphi (\beta_{n})~~\forall n \in \mathbb N
$$
and there exists  $n_{0}$ such that $\alpha_{n_0} \ne \beta_{n_0}$.
It follows from the last-mentioned inequality and  \eqref{ff1}  that $\varphi(\alpha_{n_0}) \ne \varphi(\beta_{n_0})$. 
This contradicts the equality $\varphi (\alpha_{n})=\varphi (\beta_{n})$.
Hence the set $G$ does not contain the set of ternary-irrational numbers. 

\item Let $f(x_1)=f(x_2)=y_{1,2}$ be a ternary-rational number. Then there exists  $n_0 \in \mathbb Z^+$ such that 
$$
y_{1,2}=\Delta^3 _{\varphi(\alpha_1)\varphi(\alpha_2)...\varphi(\alpha_{n_0})\varphi(\alpha_{n_0+1})000...}=\Delta^3 _{\varphi(\beta_1)\varphi(\beta_2)...\varphi(\beta_{n_0})(\varphi(\alpha_{n_0+1})-1)222...}
$$
or
$$
y_{1,2}=\Delta^3 _{\varphi(\beta_1)\varphi(\beta_2)...\varphi(\beta_{n_0})\varphi(\beta_{n_0+1})000...}=\Delta^3 _{\varphi(\alpha_1)\varphi(\alpha_2)...\varphi(\alpha_{n_0})(\varphi(\beta_{n_0+1})-1)222...}.
$$
That is
$$
\left[
\begin{aligned}
\left\{
\begin{aligned}
\varphi(\alpha_{n_0+2})=\varphi(\alpha_{n_0+3})=\dots &=0\\
\varphi(\beta_{n_0+2})=\varphi(\beta_{n_0+3})=\dots &=2\\
\varphi(\beta_{n_0+1})& = \varphi(\alpha_{n_0+1})-1\\
\end{aligned}
\right.\\
\left\{
\begin{aligned}
\varphi(\beta_{n_0+2})=\varphi(\beta_{n_0+3})=\dots &=0\\
\varphi(\alpha_{n_0+2})=\varphi(\alpha_{n_0+3})=\dots &=2\\
\varphi(\alpha_{n_0+1})& =\varphi(\beta_{n_0+1}) -1.\\
\end{aligned}
\right.
\end{aligned}
\right.
$$
Here $\varphi(\alpha_1)=\varphi(\beta_1), \varphi(\alpha_2)=\varphi(\beta_2), \dots ,\varphi(\alpha_{n_0})=\varphi(\beta_{n_0})$.

Hence, 
$$
\left[
\begin{aligned}
\left\{
\begin{aligned}
\alpha_{n_0+2}=\alpha_{n_0+3}=\dots &=0\\
\beta_{n_0+2}=\beta_{n_0+3}=\dots & = 1\\
\left[
\begin{aligned}
\left\{
\begin{aligned}
\alpha_{n_0+1}&=2\\
\beta_{n_0+1} & = 0\\
\end{aligned}
\right.\\
\left\{
\begin{aligned}
\alpha_{n_0+1}&=1\\
\beta_{n_0+1} & = 2\\
\end{aligned}
\right.\\
\end{aligned}
\right.\\
\end{aligned}
\right.\\
\left\{
\begin{aligned}
\alpha_{n_0+2}=\alpha_{n_0+3}=\dots &=1\\
\beta_{n_0+2}=\beta_{n_0+3}=\dots & = 0\\
\left[
\begin{aligned}
\left\{
\begin{aligned}
\alpha_{n_0+1}&=0\\
\beta_{n_0+1} & = 2\\
\end{aligned}
\right.\\
\left\{
\begin{aligned}
\alpha_{n_0+1}&=2\\
\beta_{n_0+1} & = 1.\\
\end{aligned}
\right.\\
\end{aligned}
\right.\\
\end{aligned}
\right.\\
\end{aligned}
\right.
$$
\end{itemize}

So $f(x_1)=f(x_2)$ for $x_1 \ne x_2$ on the following sets: 
\begin{itemize}
\item
$G _1=\left\{x: x_1=\Delta^3 _{c_1c_2...c_{n_0}2000...} \wedge x_2=\Delta^3 _{c_1c_2...c_{n_0}0111...}\right\},$
\item
$G _2=\left\{x: x_1=\Delta^3 _{c_1c_2...c_{n_0}1000...} \wedge x_2=\Delta^3 _{c_1c_2...c_{n_0}2111...}\right\},$
\item
$G _3=\left\{x: x_1=\Delta^3 _{c_1c_2...c_{n_0}0111...} \wedge x_2=\Delta^3 _{c_1c_2...c_{n_0}2000...}\right\},$
\item
$G _4=\left\{x: x_1=\Delta^3 _{c_1c_2...c_{n_0}2111...} \wedge x_2=\Delta^3 _{c_1c_2...c_{n_0}1000...}\right\},$ 
\end{itemize}
where $c_1, c_2,\dots , c_n$  are the fixed ternary digits, $n_0 \in \mathbb Z^+$. Since the set 
$$
G=G_1 \cup G_2 \cup G_3 \cup G_4
$$
 is a subset of the set of rational numbers, we obtain that $G$ is countable.

Let us prove that \emph{the fourth property} is true. Clearly,   $\varphi(i)=\frac{-3i^2+7i}{2}$, $i \in \{0,1,2\}$, and  $1=\Delta^3 _{222...}$. Consider the difference 
$$
\varphi(i)-\varphi(2-i)=\frac{-3i^2+7i}{2}-\frac{-3(2-i)^2+7(2-i)}{2}=
$$
$$
=\frac{-3i^2+7i+3(4-4i+i^2)-7(2-i)}{2}=\frac{2i-2}{2}=i-1.
$$
Similarly,  from the definitions of   $f_{01}$ and $f_{12}$ we have the following:
$$
\varphi_{01} (i) - \varphi_{12} (i)=\frac{i^2-i}{2}-\frac{i^2-3i+2}{2}=\frac{2i-2}{2}=i-1.
$$
So  relationship  (\ref{ff5}) holds.

It is easy to prove that   (\ref{ff6}) holds. Obviously,
$$
\varphi(i)+\varphi(2-i)=\frac{-3i^2+7i}{2}+\frac{-3(2-i)^2+7(2-i)}{2}=
$$
$$
=\frac{-3i^2+7i-3(4-4i+i^2)+14-7i}{2}=\frac{-6i^2+12i+2}{2}=
$$
$$
=-3i^2+6i+1=1-3(-i^2+2i)=\frac{1}{2}+3\varphi_{02} (i).
$$

Relationships \eqref{ff7} and \eqref{ff8} follow from the definitions of $f_{01}$, $f_{02}$, $f_{12}$.

Let us show that relationship \eqref{ff9} holds. Substracting  \eqref{ff3} from \eqref{ff2} and multiplying the obtained difference by $\frac{1}{3}$, we get the equality that is equivalent to  \eqref{ff9}.

Let  $x_1=\Delta^3 _{\alpha_1\alpha_2...\alpha_n...}$ and $x_2=\Delta^3 _{\beta_1\beta_2...\beta_n...}$. A certain function $f$ is called \emph{a decreasing function} whenever the inequality $f(x_1)>f(x_2)$ holds for any  $x_1 < x_2$ from the domain of $f$. In our case, there exists $n_0 \in \mathbb Z^+$ such that $\varphi(\alpha_{n_0+1})>\varphi(\beta_{n_0+1})$ for 
$\alpha_{n_0+1}<\beta_{n_0+1}$ and  $\alpha_1=\beta_1,\dots ,\alpha_{n_0}=\beta_{n_0}$. Hence from the definition of $f$ it follows that $\alpha_{n_0+1}=1 $ and $\beta_{n_0+1}=2$. Therefore the function $f$ is a decreasing function on the following set:
$$
\left\{x: x_1<x_2 \Rightarrow (x_1=\Delta^3 _{c_1...c_{n_0}1\alpha_{n_0+2}\alpha_{n_0+3}...} \wedge  x_2=\Delta^3 _{c_1...c_{n_0}2\beta_{n_0+2}\beta_{n_0+3}...})\right\}, 
$$
 where $c_1, c_2,\dots ,c_{n_0}$ are fixed ternary digits, $n_0$ is a fixed positive integer. 

A certain function $f$ is called \emph{an increasing function} whenever $f(x_1)<f(x_2)$ holds for any $x_1 <x_2$ from the domain of $f$. In this case, there exists $n_0 \in \mathbb Z^+$ such  that $\varphi(\alpha_{n_0+1})<\varphi(\beta_{n_0+1})$ for $\alpha_{n_0+1}<\beta_{n_0+1}$ and $\alpha_1=\beta_1,\dots ,\alpha_{n_0}=\beta_{n_0}$. Similarly, the function $f$ is an increasing function on the following set:
$$
\left\{x: x_1<x_2 \Rightarrow (x_1=\Delta^3 _{c_1...c_{n_0}0\alpha_{n_0+2}\alpha_{n_0+3}...} \wedge  x_2=\Delta^3 _{c_1...c_{n_0}r\beta_{n_0+2}\beta_{n_0+3}...})\right\}, 
$$
where $r \in \{1,2\}$. 
\end{proof}
\begin{theorem}
The function  $f$ satisfies the following functional equation:
\begin{equation*}
%\label{ff10}
f(x)-f(1-x)=x-\frac{1}{2}.
\end{equation*}
\end{theorem}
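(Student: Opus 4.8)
The plan is to deduce the functional equation immediately from two relationships already established in Lemma~\ref{lmf2}. Equation~\eqref{ff5} expresses the left-hand side as $f(x)-f(1-x)=f_{01}(x)-f_{12}(x)$, while equation~\eqref{ff9} identifies this difference of auxiliary functions as $f_{01}(x)-f_{12}(x)=x-\frac{1}{2}$. Chaining these two identities gives the claim in a single step, valid for every $x\in[0,1]$.

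For a self-contained verification I would instead argue at the level of ternary digits. Writing $x=\Delta^{3}_{\alpha_1\alpha_2\ldots\alpha_n\ldots}$, the complement admits the representation $1-x=\Delta^{3}_{(2-\alpha_1)(2-\alpha_2)\ldots(2-\alpha_n)\ldots}$, which is legitimate because we have excluded the period-$(2)$ representations from consideration. Since $f$ acts digit-by-digit through $\varphi$, both values are given by convergent series, $f(x)=\sum_{n=1}^{\infty}\varphi(\alpha_n)3^{-n}$ and $f(1-x)=\sum_{n=1}^{\infty}\varphi(2-\alpha_n)3^{-n}$. The arithmetic identity $\varphi(i)-\varphi(2-i)=i-1$, already computed in the proof of~\eqref{ff5}, then yields the termwise difference $f(x)-f(1-x)=\sum_{n=1}^{\infty}(\alpha_n-1)3^{-n}$.

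Splitting this last series and summing the geometric part, $\sum_{n=1}^{\infty}(\alpha_n-1)3^{-n}=\sum_{n=1}^{\infty}\alpha_n 3^{-n}-\sum_{n=1}^{\infty}3^{-n}=x-\frac{1}{2}$, completes the argument. I expect no real obstacle here: the derivation rests entirely on facts proven in Lemma~\ref{lmf2}. The only point demanding any care is the countable set of ternary-rational points with two admissible expansions, but since both~\eqref{ff5} and~\eqref{ff9} were established on all of $[0,1]$, the functional equation inherits this universal validity automatically.
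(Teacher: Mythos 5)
Your proposal is correct and takes essentially the same route as the paper, whose entire proof is the one-line observation that the theorem follows from the identities of Lemma~\ref{lmf2}: chaining \eqref{ff5} with \eqref{ff9} is exactly that argument, and in fact slightly tighter, since the paper redundantly cites \eqref{ff4} and \eqref{ff8} as well. Your supplementary digit-by-digit verification merely unwinds the paper's own proofs of those two identities (via $\varphi(i)-\varphi(2-i)=i-1$ and $\sum_{n\ge 1}3^{-n}=\frac{1}{2}$), so it adds self-containment but not a different method.
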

\begin{proof} The statement follows from  \eqref{ff4}, \eqref{ff5}, \eqref{ff8},  and \eqref{ff9}. 
\end{proof}

\section{Differential properties}

\begin{theorem}
The function $f$ is continuous at ternary-irrational points, and ternary-rational points are points of discontinuity of the function.
Furthermore, a ternary-rational point $x_0=\Delta^3 _{\alpha_1\alpha_2...\alpha_n000...} $ is a  point of discontinuity $\frac{1}{2\cdot 3^{n-1}}$ whenever $\alpha_n=1$, and is a point of discontinuity $\left(-\frac{1}{2\cdot3^{n-1}}\right)$ whenever $\alpha_n=2$.
\end{theorem}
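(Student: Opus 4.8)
The plan is to treat continuity and discontinuity separately, exploiting that $f$ acts coordinatewise through the map $\varphi$, which merely permutes the ternary alphabet ($0\mapsto 0$, $1\mapsto 2$, $2\mapsto 1$) and is in particular a bijection of $\{0,1,2\}$.

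For continuity at a ternary-irrational point $x_0=\Delta^3_{\alpha_1\alpha_2\ldots}$, I would use that such a point has a unique ternary expansion and therefore lies strictly inside every cylinder $[\Delta^3_{\alpha_1\ldots\alpha_N000\ldots},\,\Delta^3_{\alpha_1\ldots\alpha_N222\ldots}]$. Given $\varepsilon>0$, fix $N$ with $3^{-N}<\varepsilon$; there is then a neighborhood of $x_0$ lying inside this length-$N$ cylinder, so every $x$ in it shares the first $N$ ternary digits with $x_0$. Since $\varphi$ is applied digit by digit, $f(x)$ and $f(x_0)$ agree in their first $N$ ternary digits, whence $|f(x)-f(x_0)|\le 3^{-N}<\varepsilon$. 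This gives continuity at every ternary-irrational point.

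For the jump at a ternary-rational point $x_0=\Delta^3_{\alpha_1\ldots\alpha_n000\ldots}$ with $\alpha_n\ne 0$, I would compute the two one-sided limits using the two admissible expansions of $x_0$. Approaching from the right, $x$ eventually stays in the length-$n$ cylinder with prefix $\alpha_1\ldots\alpha_n$ and trailing digits tending to $0$, so $f(x)\to\Delta^3_{\varphi(\alpha_1)\ldots\varphi(\alpha_n)000\ldots}$. Approaching from the left, $x$ eventually stays in the cylinder with prefix $\alpha_1\ldots\alpha_{n-1}(\alpha_n-1)$ and trailing digits tending to $2$; since $\varphi(2)=1$, this yields $f(x)\to\Delta^3_{\varphi(\alpha_1)\ldots\varphi(\alpha_n-1)111\ldots}$. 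The two limits differ, so $x_0$ is a point of discontinuity.

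It remains to evaluate the saltus as the difference of the right and left limits. The first $n-1$ digits cancel, and using $\sum_{k>n}3^{-k}=\tfrac12\,3^{-n}$ the jump equals
$$
\frac{\varphi(\alpha_n)-\varphi(\alpha_n-1)}{3^n}-\frac{1}{2\cdot 3^n}=\frac{2\bigl(\varphi(\alpha_n)-\varphi(\alpha_n-1)\bigr)-1}{2\cdot 3^n}.
$$
Inserting $\varphi(1)-\varphi(0)=2$ for $\alpha_n=1$ gives $\tfrac{1}{2\cdot 3^{n-1}}$, and $\varphi(2)-\varphi(1)=-1$ for $\alpha_n=2$ gives $-\tfrac{1}{2\cdot 3^{n-1}}$, exactly as claimed. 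I expect the only delicate point to be the rigorous justification of the two one-sided limits—namely, that on each side all but finitely many tail digits have stabilized (to $0$ on the right, to $2$ on the left), so that the coordinatewise action of $\varphi$ forces convergence of $f(x)$ to the stated value; once this is in place, the jump is a routine geometric-series computation.
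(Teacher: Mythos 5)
Your proposal is correct and follows essentially the same route as the paper: continuity at ternary-irrational points via agreement of leading digits (your cylinder phrasing is just a repackaging of the paper's series bound $\sum_{l\ge n_0}2\cdot 3^{-l}\to 0$), and the jump at ternary-rational points via the two one-sided limits $\Delta^3_{\varphi(\alpha_1)\ldots\varphi(\alpha_n)000\ldots}$ and $\Delta^3_{\varphi(\alpha_1)\ldots\varphi(\alpha_n-1)111\ldots}$ coming from the two admissible expansions. In fact your saltus formula $\frac{2\left(\varphi(\alpha_n)-\varphi(\alpha_n-1)\right)-1}{2\cdot 3^{n}}$ is the correct version of the paper's displayed intermediate expression $\frac{\varphi(\alpha_n)-\varphi(\alpha_n-1)-1}{2\cdot 3^{n}}$, which contains a typo, though both yield the stated values $\pm\frac{1}{2\cdot 3^{n-1}}$.
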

\begin{proof}
Let $x_0$ be a ternary-irrational number. Let us show that
$$
\lim_{x \to x_0} {|f(x)-f(x_0)|}=0.
$$

For an arbitrary $x=\Delta^3 _{\alpha_1\alpha_2...\alpha_n...} \in [0,1]$ there exists $n_0=n_0(x)$ such that 
$$
\left\{
\begin{aligned}
\alpha_m (x)&=\alpha_m (x_0), ~m=\overline{1,n_0-1}\\
 \alpha_{n_0} (x)& \ne \alpha_{n_0} (x_0),\\
\end{aligned}
\right.
$$
and the conditions $x \to x_0$  and $n_0 \to \infty$ are equivalent. Hence, 
$$
|f(x)-f(x_0)|=\left|\sum^{\infty} _{l=n_0} {\frac{\varphi(\alpha_{l}(x))-\varphi(\alpha_l (x_0))}{3^l}}\right| \le \sum^{\infty} _{l=n_0}{\frac{|\varphi(\alpha_{l}(x))-\varphi(\alpha_l (x_0))|}{3^l}}\le
$$
$$
\le \sum^{\infty} _{l=n_0} {\frac{2}{3^l}}=\frac{1}{3^{n_0-1}} \to 0~~~(n_0 \to \infty),
$$
Whence the function $f$  is continuous at  the point  $x_0$.

Let $x_0$ be a ternary-rational number. That is
$$
x_0=\Delta^3 _{\alpha_1\alpha_2...\alpha_{n-1}\alpha_n000...}=\Delta^3 _{\alpha_1\alpha_2...\alpha_{n-1}(\alpha_n-1)222...}, ~\alpha_n \ne 0.
$$
Then 
$$
\lim_{x \to x_0 -0} {f(x)}=\Delta^3 _{\varphi(\alpha_1)\varphi(\alpha_2)...\varphi(\alpha_{n-1})\varphi(\alpha_n-1)111...},
$$
$$
\lim_{x \to x_0 +0} {f(x)}=\Delta^3 _{\varphi(\alpha_1)\varphi(\alpha_2)...\varphi(\alpha_{n-1})\varphi(\alpha_n)000...},
$$
$$
\lim_{x \to x_0 +0} {f(x)}-\lim_{x \to x_0 -0} {f(x)}=\frac{\varphi(\alpha_n)-\varphi(\alpha_n-1)-1}{2\cdot 3^n}=\begin{cases}
\frac{1}{2\cdot 3^{n-1}}&\text{if $\alpha_n=1$}\\
-\frac{1}{2\cdot3^{n-1}}&\text{if $\alpha_n=2$.}
\end{cases}
$$
Whence $x_0$ is a point of discontinuity of  $f$. 
\end{proof}

\begin{theorem}
The function $f$ is nowhere differentiable. 
\end{theorem}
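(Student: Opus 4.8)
The plan is to treat the two kinds of points separately. At a \emph{ternary-rational} point the preceding theorem shows that $f$ has a jump discontinuity, so $f$ cannot be differentiable there; hence it remains only to handle a \emph{ternary-irrational} point $x_0=\Delta^3_{\alpha_1\alpha_2\ldots\alpha_n\ldots}$, at which $f$ is continuous. For such an $x_0$ I would establish non-differentiability by exhibiting two sequences $x_n\to x_0$ and $\tilde x_n\to x_0$ along which the difference quotients $\frac{f(x_n)-f(x_0)}{x_n-x_0}$ converge to two \emph{different} limits.

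The construction I would use relies on single-digit perturbations that leave the tail untouched, so that $f$ --- acting digitwise --- produces an exactly computable quotient. Concretely, fix a position $p$ and let $x^{(b)}$ be the number obtained from $x_0$ by replacing the digit $\alpha_p$ with a digit $b\ne\alpha_p$ while keeping every other digit (in particular the whole tail $\alpha_{p+1}\alpha_{p+2}\ldots$). Then
$$
x^{(b)}-x_0=\frac{b-\alpha_p}{3^p}, \qquad f(x^{(b)})-f(x_0)=\frac{\varphi(b)-\varphi(\alpha_p)}{3^p},
$$
so the difference quotient equals $\frac{\varphi(b)-\varphi(\alpha_p)}{b-\alpha_p}$, \emph{independently of $p$ and of the tail}. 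Since $x_0$ is ternary-irrational its expansion is infinite, so by the pigeonhole principle at least one digit value $d\in N^0_2$ occurs at infinitely many positions $p$. Restricting to those positions and taking the two admissible choices $b\ne d$ yields two sequences tending to $x_0$ whose difference quotients are the constants $\frac{\varphi(b)-\varphi(d)}{b-d}$. A direct check from $\varphi(0)=0$, $\varphi(1)=2$, $\varphi(2)=1$ shows these constants lie in $\left\{2,\tfrac12,-1\right\}$ and that, for each fixed $d$, the two values arising from the two choices of $b$ are distinct. The existence of two sequences approaching $x_0$ with distinct difference-quotient limits then contradicts differentiability at $x_0$.

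The point I would be most careful about is explaining why the symmetric estimates that first suggest themselves do \emph{not} settle the matter. The quotient taken between the two endpoints $\Delta^3_{\alpha_1\ldots\alpha_n000\ldots}$ and $\Delta^3_{\alpha_1\ldots\alpha_n222\ldots}$ of every rank-$n$ cylinder equals $\tfrac12$ identically, and this is by itself perfectly consistent with differentiability; it only shows that if $f'(x_0)$ existed it would have to equal $\tfrac12$. The decisive step is therefore to replace these balanced increments by the \emph{asymmetric} single-digit increments above: for each digit value $d$ the two resulting quotients are distinct, so they cannot both equal one number, which already rules out differentiability. The remaining verifications are routine: each perturbed point $x^{(b)}$ is again ternary-irrational with a unique representation, because its tail coincides with the non-eventually-constant tail of $x_0$, so $f$ acts on it unambiguously and digitwise; and the comparison of the three possible quotient values is a one-line computation.
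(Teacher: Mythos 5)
Your proof is correct and takes essentially the same approach as the paper's: a single-digit perturbation at a position $p$ where a fixed digit $d$ recurs infinitely often yields the position- and tail-independent difference quotient $\frac{\varphi(b)-\varphi(d)}{b-d}$, and the two admissible choices of $b\ne d$ produce two sequences converging to $x_0$ with distinct quotient limits. If anything, your write-up is more careful than the paper's terse conclusion, since you make explicit the pigeonhole step, the actual quotient values $\left\{2,\tfrac12,-1\right\}$, and the fact that the perturbed points remain ternary-irrational so that $f$ acts on them digitwise without ambiguity.
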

\begin{proof}
Since ternary-rational points are points of discontinuity of  $f$, we shall not consider these points. 

 Let $x_0$ be an arbitrary ternary-irrational number from  $[0;1]$. Since one of the digits is used infinitely many times in the ternary representation of $x_0$, we fix this digit $\alpha$ and fix one of the positions  $n_0$ such that $\alpha_{n_0}=\alpha$ in the ternary representation of $x_0$. That is $x_0=\Delta^3 _{\alpha_1\alpha_2...\alpha_{n_0-1}\alpha\alpha_{n_0+1}...}.$
Let  $(x_{n'})$ be a sequence of numbers   $x_{n'}$ such that
$$
x_{n'}=\Delta^3 _{\alpha_1\alpha_2...\alpha_{n_0-1}\beta_{n_0}\alpha_{n_0+1}...}.
$$
Then
$$
x_0-x_{n'}=\frac{\alpha-\beta_{n_0}}{3^{n_0}} ~\mbox{and} ~f(x_0)-f(x_{n'})=\frac{\varphi(\alpha)-\varphi(\beta_{n_0})}{3^{n_0}}.
$$
Hence,
$$
f' (x)=\lim_{\alpha \to \beta_{n_0}}{\frac{\varphi(\alpha)-\varphi(\beta_{n_0})}{\alpha-\beta_{n_0}}}.
$$
Since, for different values of $\alpha$ and $\beta_{n_0}\ne\alpha$, the derivative of $f$ at the point $x_0$ has different values, the function $f$ is nowhere differentiable.
\end{proof}

\section{Fractal properties of level sets}

Consider fractal properties of all level sets of the functions $f_{01}, f_{02}, f_{12}$. 

The following set  
$$
f^{-1} (y_0)=\{x: g(x)=y_0\},
$$
where $y_0$ is a fixed element of range of values $E(g)$ of the function $g$, is called \emph{a level set of  $g$}.

\begin{theorem}
\label{rivnif1}
The following statements are true:
\begin{itemize}
\item if there exists at least one digit $2$ in the ternary representation of $y_0$, then $f^{-1} _{ij} (y_0)=\varnothing$;

\item if $y_0=0$ or $y_0$  is a ternary-rational number from the set $C[3, \{0,1\}]$, then  
$$
\alpha_0(f^{-1} _{ij} (y_0))=\log_3 2;
$$

\item if $y_0$ is a ternary-irrational number from the set $C[3, \{0,1\}]$, then
$$
0 \le \alpha_0(f^{-1} _{ij} (y_0))\le \log_3 2,
$$
where  $\alpha_0(f^{-1} _{ij} (y_0))$ is the Hausdorff-Besicovitch  dimension of  $f^{-1} _{ij} (y_0)$.
\end{itemize}
\end{theorem}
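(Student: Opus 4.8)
The plan is to exploit the explicit coordinatewise description of the level sets. Write $y_0=\Delta^3_{\gamma_1\gamma_2\ldots}$ in its canonical ternary form and recall that each $\varphi_{ij}$ sends the two digits $i,j$ to $0$ and the third digit $k$ to $1$. Reading the defining condition $\varphi_{ij}(\alpha_n)=\gamma_n$ position by position, a point $x=\Delta^3_{\alpha_1\alpha_2\ldots}$ lies in $f^{-1}_{ij}(y_0)$ precisely when, for every $n$, $\alpha_n\in\{i,j\}$ if $\gamma_n=0$ and $\alpha_n=k$ if $\gamma_n=1$. In particular the range of $f_{ij}$ is contained in $C[3,\{0,1\}]$. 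For the first statement, suppose the canonical representation of $y_0$ contains a digit $2$. Then $y_0\notin C[3,\{0,1\}]$: a number with only $0,1$ digits cannot have a $2$ in its canonical expansion, since its only competing representation ends in the forbidden period $(2)$, which is not a $\{0,1\}$-representation either. Hence $y_0$ is not in the range and $f^{-1}_{ij}(y_0)=\varnothing$.

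For the upper bound, valid for every admissible $y_0\in C[3,\{0,1\}]$, I would cover $f^{-1}_{ij}(y_0)$ at level $n$ by the cylinders of the admissible prefixes $\alpha_1\ldots\alpha_n$. There are exactly $2^{N_n}$ of them, where $N_n=\#\{k\le n:\gamma_k=0\}$, each of diameter $3^{-n}$. Since $N_n\le n$, this is at most $2^n$ intervals of length $3^{-n}$, so for $s=\log_3 2$ the $s$-dimensional covering sum stays bounded and $\alpha_0(f^{-1}_{ij}(y_0))\le\log_3 2$. This already supplies the upper bounds in the second and third statements, while the lower bound $0$ in the third statement is trivial.

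For the lower bound in the rational case, suppose $y_0=0$ or $y_0=\Delta^3_{\gamma_1\ldots\gamma_m000\ldots}\in C[3,\{0,1\}]$ is ternary-rational, so that $\gamma_n=0$ for all $n>m$. Then for every admissible prefix the tail digits $\alpha_{m+1},\alpha_{m+2},\ldots$ range freely over $\{i,j\}$, whence $f^{-1}_{ij}(y_0)$ is a nonempty finite union of affine copies, scaled by $3^{-m}$, of the self-similar set
$$C_{ij}=\Big\{\Delta^3_{\delta_1\delta_2\ldots}:\delta_n\in\{i,j\}\Big\}.$$
This $C_{ij}$ is the attractor of the two similarities $x\mapsto(x+i)/3$ and $x\mapsto(x+j)/3$ of ratio $1/3$; taking $U=(0,1)$ verifies the open set condition for each of the three digit pairs $\{0,1\},\{1,2\},\{0,2\}$, so $\alpha_0(C_{ij})=\log_3 2$. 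Because Hausdorff--Besicovitch dimension is invariant under similarities and stable under finite unions, $\alpha_0(f^{-1}_{ij}(y_0))\ge\log_3 2$, and with the upper bound this forces equality.

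I expect the only delicate point to be this lower bound, namely identifying the self-similar substructure and checking the open set condition; in the touching cases $\{0,1\}$ and $\{1,2\}$ the two branches meet only in a single endpoint, which does not affect the dimension. The remaining ingredients—the coordinatewise description of the preimage, the count $2^{N_n}$, and the countably many doubly represented ternary-rational boundary points—are routine and dimension-irrelevant. Finally, no exact value is claimed in the third statement; the refined homogeneous Moran computation would yield $\alpha_0(f^{-1}_{ij}(y_0))=(\log_3 2)\,\liminf_{n\to\infty}N_n/n$, which is exactly why only the two-sided estimate $0\le\alpha_0(f^{-1}_{ij}(y_0))\le\log_3 2$ is asserted there.
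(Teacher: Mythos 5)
Your proposal is correct, and its skeleton coincides with the paper's: both read the defining condition $\varphi_{ij}(\alpha_n)=\gamma_n$ digit by digit, conclude that the range of $f_{ij}$ is $C[3,\{0,1\}]$ (which gives the first claim), and, for $y_0=0$ or ternary-rational $y_0$, exhibit the level set as finitely many affine copies, scaled by a power of $1/3$, of the Cantor-type set $C[3,\{i,j\}]$, whose dimension $\log_3 2$ then transfers by similarity invariance and stability under finite unions. Where you genuinely go beyond the paper is the upper bound: the paper asserts $\alpha_0(f^{-1}_{ij}(y_0))\le\log_3 2$ without a covering argument, and in the irrational case only gestures at the dependence on the frequency of the digit $1$ via the ratio $n/l_n$, whereas your count of $2^{N_n}\le 2^n$ admissible cylinders of diameter $3^{-n}$, evaluated at $s=\log_3 2$, is an actual proof of the upper bound valid uniformly in all three cases. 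Similarly, you justify $\alpha_0(C[3,\{i,j\}])=\log_3 2$ by verifying the open set condition for the two ratio-$1/3$ similarities, where the paper takes the Cantor-set dimension as known. Two minor points you handled that the paper glosses over are worth keeping: a number with a digit $2$ in its canonical expansion admits no $\{0,1\}$-representation, since its only competing expansion ends in the excluded period $(2)$; and the countably many representations ending in $(2)$ removed from the domain (relevant when $2\in\{i,j\}$) alter the level sets only by countable sets, which is dimension-irrelevant. Your closing aside that the exact value in the irrational case would be $(\log_3 2)\liminf_{n\to\infty} N_n/n$ is consistent with the paper's limiting remarks and correctly explains why only the two-sided estimate is asserted there.
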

\begin{proof}
From the definition of $f_{ij}$ it follows that $f^{-1} _{ij} (y_0)=\varnothing$ whenever there exists at least one digit  $2$ in the ternary representation of $y_0$. That is  range of values of  $f_{ij}$ is the following set
$$
E_1=\left\{x: x=\Delta^3 _{e_1e_2...e_n...}, e_n \in \{0,1\}\right\}=C[3, \{0,1\}].
$$

Similarly, if  $y_0=0$, then  the set of preimages is a set of Cantor type and a value of the Hausdorff-Besicovitch dimension  of this set equals    $\log_{3} 2$. The sets of preimages of $0$ under the functions $f_{01},f_{02},f_{12}$ are the following sets respectively: $E_1$,  $E_2=\{x: x=\Delta^3 _{u_1u_2...u_n...}, u_n \in\{0,2\}\},$ $E_3=\{x: x=\Delta^3 _{v_1v_2...v_n...}, v_n \in\{1,2\}\}.$

Let $y_0$  be a ternary-rational number from $E_1$, i.e., 
$$
y_0=\frac{1}{3^{l_1}}+\frac{1}{3^{l_2}}+\dots+\frac{1}{3^{l_{n_0}}}+\frac{0}{3^{{l_{n_0}+1}}}+\frac{0}{3^{{l_{n_0}+2}}}+\dots .
$$
Then the set of preimages of $y_0$ under the functions  $f_{01}$, $f_{02}$,  $f_{12}$ is the set of all numbers from $[0,1]$ such that:
 \begin{enumerate}
\item are numbers whose the ternary representation is periodic and this period contains digits from the sets $\{0,1\}$ (in the case of the function $f_{01}$), $\{0,2\}$ (in the case of $f_{02}$), $\{1,2\}$ (in the case of $f_{12}$) respectively;
\item are irrational numbers  such that 
their ternary representation contains the digits from $\{0,1\}, \{0,2\}, \{1,2\}$ respectively on positions $n_0+1, n_0+2,\dots$, where $n_0$ is a fixed number from $\mathbb N\cup \{0\}$.
\end{enumerate}

It is easy to prove that a value of the Hausdorff-Besicovitch dimension of such set equals $\log_3 {2}$. For example, consider the case of the function $y=f_{01} (x)$. Obviously,
$$
f^{-1} _{01} (y_0)=\left\{x: x=a_{l_{n_0}}+\frac{1}{3^{l_{n_0}}}\left(\frac{e_{l_{n_0}+1}}{3}+\frac{e_{l_{n_0}+2}}{3^2}+\dots+\frac{e_{l_{n_0}+m}}{3^m}+\dots\right)\right\}=
$$
$$
=\left\{x: x=a_{l_{n_0}} +\frac{1}{3^{l_{n_0}}} E_1\right\},~\mbox{where}~a_{l_n}=\Delta^3 _{e_1...e_{l_1-1}2e_{l_1+1}...e_{l_2-1}2e_{l_2+1}...e_{l_{n_0}-1}200...},
$$
$n_0$ is a fixed positive integer, $e_{l_n} \in\{0,1\}$, and $n \in \mathbb N$.

Since $n_0$ is a fixed number that depends only on $y_0$, we obtain
$$
\alpha_0 (f^{-1} _{01} (y_0))=\alpha_0 (E_1)=\log_{3}2,
$$
where $\alpha_0 (f^{-1} _{01} (y_0))$ is the Hausdorff-Besicivitch dimension of the set  $f^{-1} _{01} (y_0)$.

Consider the cases of the functions $f_{02}$ and  $f_{12}$.

 Let $y_0$ be a ternary-irrational number from  $E_1,$ i.e.,
$$
y_0=\frac{1}{3^{l_1}}+\frac{1}{3^{l_2}}+\dots+\frac{1}{3^{l_{n}}}+\dots .
$$
Then
$$
f^{-1} _{ij} (y_0)=\{x: x=\widehat{\Delta}^{l_1l_2...l_n...}  _{kk...k...}, l_n \in \mathbb N\},
$$
where $\widehat{\Delta}^{l_1l_2...l_n...}  _{kk...k...}$ is a number from  $[0,1]$ such that the digit $k$ is situated on the positions 
$l_1,l_2,\dots ,l_n,\dots ,$ and the digits from $\{i,j\}$ are situated on other positions in the ternary representation of this number.
Here $l_1,l_2,\dots ,l_n,\dots$, are fixed positive integers and depend  on $y_0$ such that $y_0=f_{ij} (x_0)$. That is the sequence $(l_n)$ is a given monotonic increasing sequence of positive integers.

Hence $0 \le \alpha_0(f^{-1} _{ij} (y_0)) \le \log_3 2$. This fact depends on the frequence of the digit  $1$  in the ternary representation of  $y_0$. Since
$$
\widehat{\Delta}^{l_1l_2...l_n...}  _{kk...k...}=\Delta^3 _{\underbrace{00...00}_{{l_1-1 }}k\underbrace{000...000}_{{l_2-l_1-1}}k... }+\widehat{\Delta}^{l_1l_2...l_n...}  _{00...0...},
$$
where $\Delta^3 _{\underbrace{00...00}_{{l_1-1}}k\underbrace{000...000}_{{l_2-l_1-1}}k... }$ is a fixed number that depens only on  $y_0$, and 
$\widehat{\Delta}^{l_1l_2...l_n...}  _{00...0...}$ is a subset of  $C[3,\{i, j\}]$ (for $k\ne 0$), and 
$$
\alpha_0(f^{-1} _{ij} (y_0)) \to 0 ~~~\mbox{as}~~~\frac{n}{l_n} \to 1,
$$
$$
\alpha_0(f^{-1} _{ij} (y_0)) \to \log_3 2 ~~~\mbox{as}~~~\frac{n}{l_n} \to 0,
$$
as $n \to \infty$.  The last two paragraphs are true  for  $k=0$ as well. 
\end{proof}

\section{Fractal properties of graph of considered function}

Suppose that 
$$
X=[0;1]\times[0;1]=\left\{(x,y): x=\sum^{\infty} _{m=1} {\frac{\alpha_m}{3^{m}}}, \alpha_{m} \in N^{0} _{2},
y=\sum^{\infty} _{m=1} {\frac{\beta_m}{3^{m}}}, \beta_{m} \in N^{0} _{2}\right\}.
$$
Then the set 
$$
\sqcap_{(\alpha_{1}\beta_{1})(\alpha_{2}\beta_{2})...(\alpha_{m}\beta_{m})}=\Delta^{3} _{\alpha_{1}\alpha_{2}...\alpha_{m}}\times\Delta^{3} _{\beta_{1}\beta_{2}...\beta_{m}}
$$
is a square with a side length of $3^{-m}$. This square is called \emph{a square of rank $m$ with the base $(\alpha_{1}\beta_{1})(\alpha_{2}\beta_{2})\ldots (\alpha_{m}\beta_{m})$}.

If  $E\subset X$, then the number
$$
\alpha^{K}(E)=\inf\{\alpha: \widehat{H}_{\alpha} (E)=0\}=\sup\{\alpha: \widehat{H}_{\alpha} (E)=\infty\},
$$
where
$$
\widehat{H}_{\alpha} (E)=\lim_{\varepsilon \to 0} \left[{\inf_{d\leq \varepsilon} {K(E,d)d^{\alpha}}}\right]
$$
and $K(E,d)$ is the minimum number of squares of diameter $d$ required to cover the set $E$, is called \emph{ the fractal cell entropy dimension of the set E.} It is easy to see that $\alpha^{K}(E)\ge \alpha_0(E)$.
\begin{theorem}
The Hausdorff-Besicovitch dimension of the graph of $f$  is equal to 1.
\end{theorem}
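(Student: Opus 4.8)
The plan is to pin down the Hausdorff--Besicovitch dimension $\alpha_0(\Gamma)$ of the graph $\Gamma=\{(x,f(x)):x\in[0,1]\}$ from both sides and obtain the value $1$. For the lower bound I would exploit the orthogonal projection $\pi\colon(x,y)\mapsto x$ of $X$ onto the first axis. Since $f$ is defined on all of $[0,1]$, the image $\pi(\Gamma)$ is $[0,1]$ (up to the countable set of excluded points, which is irrelevant for dimension), and $\pi$ is $1$-Lipschitz. A Lipschitz map cannot increase Hausdorff dimension, so $\alpha_0(\Gamma)\ge\alpha_0(\pi(\Gamma))=\alpha_0([0,1])=1$.

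For the upper bound I would estimate the fractal cell entropy dimension $\alpha^{K}(\Gamma)$ and then invoke the inequality $\alpha^{K}(E)\ge\alpha_0(E)$ recorded just before the statement. The structural fact driving the estimate is that $f$ acts digit-by-digit: if $x=\Delta^3_{\alpha_1\alpha_2\ldots}$ ranges over a fixed cylinder $\Delta^3_{\alpha_1\ldots\alpha_m}$, then the first $m$ ternary digits of $f(x)$ are frozen at $\varphi(\alpha_1)\ldots\varphi(\alpha_m)$ and only the tail varies. Hence the image $f\bigl(\Delta^3_{\alpha_1\ldots\alpha_m}\bigr)$ is contained in an interval of length at most $\sum_{k>m}2\cdot 3^{-k}=3^{-m}$. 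Consequently the portion of $\Gamma$ lying over each of the $3^m$ cylinders of rank $m$ sits in a vertical strip of width $3^{-m}$ and height at most $3^{-m}$, so it meets at most two squares of rank $m$ with base of the form $\sqcap_{(\alpha_1\beta_1)\ldots(\alpha_m\beta_m)}$.

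This gives the covering bound $K(\Gamma,d)\le 2\cdot 3^{m}$ for $d=\sqrt{2}\,3^{-m}$. Then for any fixed $\alpha>1$ one has $K(\Gamma,d)\,d^{\alpha}\le 2^{1+\alpha/2}\,3^{m(1-\alpha)}\to 0$ as $m\to\infty$, whence $\widehat{H}_{\alpha}(\Gamma)=0$ and therefore $\alpha^{K}(\Gamma)\le 1$. Combining with $\alpha^{K}(\Gamma)\ge\alpha_0(\Gamma)$ yields $\alpha_0(\Gamma)\le 1$, and together with the projection bound we conclude $\alpha_0(\Gamma)=1$.

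The only delicate point I anticipate is the oscillation estimate at cylinder boundaries: ternary-rational arguments carry two representations and $f$ is discontinuous there, so one must verify that the image of a rank-$m$ cylinder still fits inside a single interval of length $3^{-m}$ no matter which representation is used. Here the exclusion of period-$(2)$ tails, together with the explicit jump sizes $\pm\frac{1}{2\cdot 3^{n-1}}$ computed in the discontinuity theorem, keeps the bound honest; the resulting extra factor $2$ in the square count is a constant and does not affect the dimension. Everything else is the routine geometric-series computation sketched above.
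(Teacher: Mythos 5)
Your proof is correct, and its upper bound is essentially the paper's argument: you count the rank-$m$ squares $\sqcap_{(\alpha_1\beta_1)\ldots(\alpha_m\beta_m)}$ meeting the graph, observe that the digit-by-digit action of $f$ freezes the first $m$ digits of $f(x)$ over each rank-$m$ cylinder, get $K(\Gamma,d)\le C\cdot 3^m$ at scale $d=\sqrt{2}\,3^{-m}$, and conclude $\widehat{H}_{\alpha}(\Gamma)=0$ for $\alpha>1$, hence $\alpha^{K}(\Gamma)\le 1$ and so $\alpha_0(\Gamma)\le 1$. (In fact your factor $2$ is unnecessary: since $f$ uses the fixed representation of $x$ that avoids the period-$(2)$ tail, every graph point over the cylinder $\Delta^3_{\alpha_1\ldots\alpha_m}$ lies in the single square $\sqcap_{(\alpha_1\varphi(\alpha_1))\ldots(\alpha_m\varphi(\alpha_m))}$, which is exactly the paper's count of $3^m$ squares; but a constant factor is immaterial.) Where you genuinely diverge is the lower bound: the paper disposes of it by asserting that the graph ``has self-similar properties'' and then declaring $\alpha^{K}(\Gamma_f)=\alpha_0(\Gamma_f)=1$, which is not really an argument --- note that $\alpha^{K}\ge\alpha_0$ only transfers the \emph{upper} bound. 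Your projection argument (the map $\pi\colon(x,y)\mapsto x$ is $1$-Lipschitz, $\pi(\Gamma)$ is $[0,1]$ up to the countable excluded set, and Lipschitz maps do not increase Hausdorff dimension, so $\alpha_0(\Gamma)\ge 1$) closes this gap rigorously and more generally: it needs no self-similarity at all and works for the graph of any function defined on all of $[0,1]$. So your write-up is not just correct but strictly tighter than the paper's on the half of the proof the paper leaves vague.
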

\begin{proof}
From the definition and properties of the function  $f$ it follows that the graph of the function belongs to $3$ squares from  $9$
first-rank squares:
$$
\sqcap_{(ij)}=\left[\frac{i}{3};\frac{i+1}{3}\right]\times\left[\frac{j}{3};\frac{j+1}{3}\right],~i \in N^0 _2, ~j \in N^0 _2,
$$
i.e., $\sqcap_{(00)},\sqcap_{(12)},\sqcap_{(21)}$.

The graph of  the function $f$ belongs to $9=3^2$ squares from $81=3^4$ second-rank squares:
$$
\sqcap_{(i_1j_2)(i_2j_2)}=\left[\frac{i_1}{3}+\frac{i_2}{3^2};\frac{i_1}{3}+\frac{i_2+1}{3^2}\right]\times\left[\frac{j_1}{3}+\frac{j_2}{3^2};\frac{j+1}{3}+\frac{j_2+1}{3^2}\right],~i \in N^0 _2, ~j \in N^0 _2,
$$
$i_1 \in N^0 _2, i_2 \in N^0 _2, j_1 \in N^0 _2, j_2 \in N^0 _2$, i.e.,
\begin{enumerate}
\item The part of the graph, which is in the square $\sqcap_{(00)}$, belongs to $3$ squares $\sqcap_{(00)(00)},\sqcap_{(00)(12)},  \sqcap_{(00)(21)}$;
\item  the part of the graph, which is in the square $\sqcap_{(12)}$, belongs to $3$ squares $\sqcap_{(12)(00)},\sqcap_{(12)(12)},  \sqcap_{(12)(21)}$;
\item the part of the graph, which is in the square $\sqcap_{(21)}$, belongs to $3$ squares $\sqcap_{(21)(00)},\sqcap_{(21)(12)},  \sqcap_{(21)(21)}$, etc.
\end{enumerate}

The graph $\Gamma_{f}$ of the function  $f$ belongs to $3^m$ squares of rank $m$ with side $3^{-m}$. Then
$$
\widehat{H}_{\alpha} (\Gamma_f)=\lim_{\overline{m \to \infty}} {3^m \left(\sqrt{3^{-2m}+3^{-2m}}\right)^{\alpha}}=\lim_{\overline{m \to \infty}} {3^m \left(2\cdot 3^{-2m}\right)^{\frac{\alpha}{2}}}=
$$
$$
=\lim_{\overline{m \to \infty}} {\left(3^{\frac{2m}{\alpha}-2m}\cdot 2\right)^{\frac{\alpha}{2}}}=\lim_{\overline{m \to \infty}} {\left(2^{\frac{\alpha}{2}}\cdot (3^{1-\alpha})^{m}\right)}.
$$

It is obvious that if $3^{(1-\alpha)m} \to 0$ for $\alpha >1$, and the graph of the function has self-similar properties, then  $\alpha^K (\Gamma_f)=\alpha_0(\Gamma_f)=1$. 
\end{proof}

\section{Lebesgue integral}

\begin{theorem}
The Lebesgue integral of the function $f$   is equal to $\frac{1}{2}$.
\end{theorem}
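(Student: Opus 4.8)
The plan is to exploit the self-affine structure that is built into the digit definition of $f$, rather than the functional equation: integrating the relation $f(x)-f(1-x)=x-\tfrac12$ over $[0,1]$ only yields the tautology $I-I=0$, so it carries no information about $I:=\int_0^1 f(x)\,dx$. First I would record that $f$ takes values in $[0,1]$ and, by the continuity/discontinuity theorem proved above, is continuous off the countable set of ternary-rational points; hence $f$ is bounded and Lebesgue measurable, so $I$ is well defined, and that null set may be discarded in every computation below.

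Next I would read off the three affine relations governing $f$ on the first-rank intervals. From \eqref{ff1} together with $\varphi(0)=0$, $\varphi(1)=2$, $\varphi(2)=1$ and the shift identity $f(x)=\tfrac{\varphi(\alpha_1)}{3}+\tfrac13 f(\{3x\})$, where $\{3x\}=\Delta^3_{\alpha_2\alpha_3\dots}$, I obtain for almost every $x$
\begin{equation*}
f(x)=\begin{cases}
\frac13 f(3x) & x\in[0,\tfrac13),\\[2pt]
\frac23+\frac13 f(3x-1) & x\in[\tfrac13,\tfrac23),\\[2pt]
\frac13+\frac13 f(3x-2) & x\in[\tfrac23,1).
\end{cases}
\end{equation*}
Splitting $I$ over these three subintervals and applying the substitution $u=3x-k$ ($k=0,1,2$) to each, every piece contributes $\tfrac19 I$ together with the integral of its additive constant, which gives
\begin{equation*}
I=\frac19 I+\left(\frac29+\frac19 I\right)+\left(\frac19+\frac19 I\right)=\frac13 I+\frac13,
\end{equation*}
and solving the linear equation $\tfrac23 I=\tfrac13$ yields $I=\tfrac12$.

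As an independent check I would also compute $I$ probabilistically. Since $f(x)=\sum_{n=1}^{\infty}\tfrac{\varphi(\alpha_n(x))}{3^n}$ with all summands nonnegative, Tonelli's theorem permits interchanging the sum and the integral; for each fixed $n$ the $n$-th digit $\alpha_n(x)$ equals each of $0,1,2$ on a set of measure $\tfrac13$, so $\int_0^1\varphi(\alpha_n(x))\,dx=\tfrac13\big(\varphi(0)+\varphi(1)+\varphi(2)\big)=1$, and summing the resulting geometric series $\sum_n 3^{-n}$ again gives $I=\tfrac12$.

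The only genuine obstacle is justifying that these manipulations are legitimate, and it is mild: the self-similar recursions fail only at the interval endpoints and the digit expansion is ambiguous only for ternary-rational numbers, but in both arguments the exceptional set is countable, hence Lebesgue-null, so it affects neither the value of $I$ nor the substitutions. Everything else is the routine evaluation of the substitutions and the geometric sum.
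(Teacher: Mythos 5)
Your main argument coincides with the paper's: the paper's terse equation $I=3\cdot\tfrac{1}{3^2}+3I\cdot\tfrac{1}{3^2}=\tfrac13 I+\tfrac13$ is exactly your decomposition $I=\tfrac19 I+\bigl(\tfrac29+\tfrac19 I\bigr)+\bigl(\tfrac19+\tfrac19 I\bigr)$ over the three first-rank intervals, and you merely supply the measurability and null-set justifications that the paper compresses into the phrase ``self-similar properties.'' Your Tonelli/digit-expectation computation is a correct independent check not present in the paper, but the core route is the same.
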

\begin{proof}
The conditions of existence of the Lebesgue integral are true for the function $f$. Since the function has self-similar properties, the Lebesgue integral $I$ of $f$ can be calculated by the following equality:
$$
I=3\frac{1}{3^2}+3I\frac{1}{3^2}=\frac{1}{3}I+\frac{1}{3},
$$
whence $I=\frac{1}{2}$.
\end{proof}

\section{Certain generalizations}

One can define $m=3!=6$ functions  determined on $[0,1]$ in terms of the ternary number system by the following way:
$$
\Delta^{3} _{\alpha_{1}\alpha_{2}...\alpha_{n}...}\stackrel{f_m}{\rightarrow} \Delta^{3} _{\varphi_m(\alpha_{1})\varphi_m(\alpha_{2})...\varphi_m(\alpha_{n})...},
$$
where the function $\varphi_m(\alpha_n)$ determined on an alphabet of the ternary number system and $f_m (x)$ is defined by the following table for each $m=\overline{1,6}$. 
\begin{center}
\begin{tabular}{|c|c|c|c|}
\hline
 &$ $ 0 &$ 1 $ & $2$\\
\hline
$\varphi_1 (\alpha_n) $ &$0$ & $1$ & $2$\\
\hline
$\varphi_2 (\alpha_n) $ &$0$ & $2$ & $1$\\
\hline
$\varphi_3 (\alpha_n) $ &$1$ & $0$ & $2$\\
\hline
$\varphi_4 (\alpha_n) $ &$1$ & $2$ & $0$\\
\hline
$\varphi_5 (\alpha_n) $ &$2$ & $0$ & $1$\\
\hline
$\varphi_6 (\alpha_n) $ &$2$ & $1$ & $0$\\
\hline
\end{tabular}
\end{center}
That is one can to model a class of functions whose values  are obtained from the ternary representation
of the argument by  a certain  change of ternary digits.

It is easy to see that the function $f_1 (x)$ is the function  $y=x$ and the function $f_6 (x)$ is the function $y=1-x$, i. e.,
$$
y=f_1(x)=f_1\left(\Delta^3 _{\alpha_1\alpha_2\ldots \alpha_n\ldots}\right)=\Delta^3 _{\alpha_1\alpha_2\ldots \alpha_n\ldots}=x,
$$
$$
y=f_6(x)=f_6\left(\Delta^3 _{\alpha_1\alpha_2\ldots \alpha_n\ldots}\right)=\Delta^3 _{[2-\alpha_1][2-\alpha_2]\ldots  [2-\alpha_n]\ldots}=1-x.
$$

\begin{lemma} 
Any function  $f_m$ can be represented by the functions $f_{ij}$ in the following form
Довільна функція виражається за допомогою функцій  наступним чином
$$
f_m=a^{(ij)} _{m}x+b^{(ij)} _{m}+c^{(ij)} _{m}f_{ij} (x), ~\mbox{where}~a^{(ij)} _{m}, b^{(ij)} _{m}, c^{(ij)} _{m} \in \mathbb Q.
$$
\end{lemma}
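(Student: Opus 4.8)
The plan is to reduce the statement to a single linear-algebra computation on the three-letter alphabet $N^0_2=\{0,1,2\}$, mirroring exactly the three representations already obtained for $f=f_2$ in Lemma~\ref{lmf1}. The essential observation is that both $\varphi_m$ and $\varphi_{ij}$ are functions defined on the finite set $N^0_2$, and the space of all real-valued functions on a three-point set is three-dimensional. Hence it suffices to express the single digit-map $\varphi_m$ as a rational linear combination of the three digit-maps $\alpha\mapsto\alpha$, $\alpha\mapsto 1$, and $\alpha\mapsto\varphi_{ij}(\alpha)$; the corresponding identity for the functions $f_m,f_{ij}$ will then follow by termwise summation of the ternary series.

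First I would carry out the lifting step, which is immediate. Suppose we have found rationals $a,b,c$ with
$$
\varphi_m(\alpha)=a\alpha+b+c\,\varphi_{ij}(\alpha)\qquad\text{for every }\alpha\in N^0_2.
$$
Applying this to each digit $\alpha_n$ of $x=\Delta^3_{\alpha_1\alpha_2\ldots\alpha_n\ldots}$ and dividing by $3^n$, the series converge absolutely (all digits are bounded), so they may be summed separately. Using $\sum_{n\ge 1}3^{-n}=\tfrac12$ together with $\sum_{n\ge1}\alpha_n3^{-n}=x$, $\sum_{n\ge1}\varphi_{ij}(\alpha_n)3^{-n}=f_{ij}(x)$ and $\sum_{n\ge1}\varphi_m(\alpha_n)3^{-n}=f_m(x)$, we obtain
$$
f_m(x)=a\,x+\tfrac{b}{2}+c\,f_{ij}(x),
$$
which is exactly the asserted form with $a^{(ij)}_m=a$, $b^{(ij)}_m=b/2$ and $c^{(ij)}_m=c$, all rational.

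It remains to solve the digit-level system, which is the technical core. Writing the three equations for $\alpha=0,1,2$ gives a $3\times3$ linear system in the unknowns $a,b,c$ whose coefficient matrix has rows $(\alpha,\,1,\,\varphi_{ij}(\alpha))$ and whose right-hand side is $(\varphi_m(0),\varphi_m(1),\varphi_m(2))$; all entries are integers. One computes that the determinant of this matrix equals $-1$ for $f_{01}$, $2$ for $f_{02}$, and $-1$ for $f_{12}$, so in every case it is nonzero, the system is nonsingular, and by Cramer's rule it has a unique solution with denominators dividing the determinant — hence a rational solution. This is precisely the statement that $\{1,\alpha,\varphi_{ij}(\alpha)\}$ is a basis of the function space on $N^0_2$.

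The main obstacle is thus confirming this nonsingularity for each admissible pair $ij$, i.e. verifying that the indicator-type map $\varphi_{ij}$ is genuinely independent of the constant and identity maps; this is what makes $f_{01},f_{02},f_{12}$ adequate building blocks and is the only place where the specific values in the defining table of $\varphi_{ij}$ enter. Everything else — the termwise summation and the passage from rational digit-coefficients to rational function-coefficients — is formal. As a consistency check, specializing $m=2$ should recover the three identities \eqref{ff2}, \eqref{ff3}, \eqref{ff4} of Lemma~\ref{lmf1}, and specializing $m=1$ or $m=6$ should reproduce $f_1(x)=x$ and $f_6(x)=1-x$ in terms of any chosen $f_{ij}$.
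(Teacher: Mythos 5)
Your proposal is correct, and it is worth noting that the paper itself states this lemma \emph{without any proof} (only the corollary-style theorem follows it), so there is no argument of the author's to match; the closest thing in the paper is the proof of Lemma \ref{lmf1}, which establishes the three special cases \eqref{ff2}, \eqref{ff3}, \eqref{ff4} for the single function $f=f_2$ by bespoke digit manipulations --- splitting $x$ into two numbers with digits in $\{0,1\}$, using the ad hoc identity $\varphi(2)=\varphi(1)+\varphi(1)-3$, and then chaining through $f_{01}(x)=x-\tfrac12+f_{12}(x)$ and the sum relation \eqref{ff7}. Your route is genuinely different and more systematic: you observe that $\{\,\alpha\mapsto 1,\ \alpha\mapsto\alpha,\ \alpha\mapsto\varphi_{ij}(\alpha)\,\}$ spans the three-dimensional space of functions on $N^0_2$, verify nonsingularity once per pair (your determinant values $-1$, $2$, $-1$ for $f_{01}$, $f_{02}$, $f_{12}$ are correct, as I checked by direct expansion), and lift digitwise identities to the functions by termwise summation of the absolutely convergent ternary series, using $\sum_{n\ge1}3^{-n}=\tfrac12$. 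This proves all $18$ representations (six $f_m$ against three $f_{ij}$) uniformly, with rationality of the coefficients coming for free from Cramer's rule, and your consistency checks do recover \eqref{ff2}--\eqref{ff4} (e.g.\ for $m=2$ and $f_{01}$: $b=0$, $a=2$, $c=-3$). What the paper's case-by-case method buys is explicit elementary identities without linear-algebra language; what yours buys is a complete proof of the stated lemma in full generality, which the paper omits. One pedantic point you could make explicit: since ternary-rational numbers have two representations and the paper excludes representations with period $(2)$, the digitwise identity should be applied to the chosen admissible representation, with respect to which all the functions $f_m$, $f_{ij}$ are defined --- then the lifted identity holds everywhere on $[0,1]$ without further case analysis.
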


One can to formulate the following corollary.

\begin{theorem}
The function $f_m $ such that $f_m(x)\ne x$ and $f_m(x)\ne1-x$ is:
\begin{itemize}
\item continuous almost everywhere;
\item nowhere differentiable;
\item a function whose the Hausdorff-Besicovitch dimension of the grapf is equal to  $1$;
\item  a function whose the Lebesgue integral is equal to $\frac{1}{2}$.
\end{itemize}
\end{theorem}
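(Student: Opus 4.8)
The plan is to observe that every proof given above for the main function $f=f_2$ used only one structural fact about the digit map: that $\varphi_2$ is a permutation of $\{0,1,2\}$ which is neither the identity $i\mapsto i$ nor the reflection $i\mapsto 2-i$. The four functions covered by the statement are precisely $f_2,f_3,f_4,f_5$, because the hypotheses $f_m(x)\neq x$ and $f_m(x)\neq 1-x$ discard exactly $f_1$ (identity) and $f_6$ (reflection). So I would first record the algebraic reformulation of the hypothesis: one has $f_m(x)\equiv x$ iff $\varphi_m(i)=i$ for all $i$, and $f_m(x)\equiv 1-x$ iff $\varphi_m(i)=2-i$. Equivalently, the three points $(0,\varphi_m(0)),(1,\varphi_m(1)),(2,\varphi_m(2))$ are collinear iff $\varphi_m$ is affine in the digit, which occurs only in the two excluded cases; hence for the remaining four functions these three points are \emph{non-collinear}.

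For continuity almost everywhere and for the integral I would reuse the corresponding proofs essentially verbatim. The continuity estimate $|f_m(x)-f_m(x_0)|\le\sum_{l\ge n_0}2\cdot 3^{-l}=3^{-(n_0-1)}\to 0$ holds for every digit permutation, so $f_m$ is continuous at each ternary-irrational point; since the ternary-rational points form a countable, hence null, set, this gives continuity almost everywhere. For the integral, the self-similar identity $\int_{k/3}^{(k+1)/3}f_m=\frac13\left(\frac{\varphi_m(k)}{3}+\frac13 I_m\right)$, summed over $k=0,1,2$, yields $I_m=\frac19\sum_k\varphi_m(k)+\frac13 I_m$; because $\varphi_m$ is a permutation, $\sum_k\varphi_m(k)=0+1+2=3$, which forces $I_m=\frac12$ independently of the particular permutation used.

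The differential and fractal parts are where non-collinearity is actually needed. For nowhere differentiability, at a ternary-irrational $x_0$ I would fix any digit value $\alpha$ occurring infinitely often and approach $x_0$ through the points obtained by replacing $\alpha$ with $\beta\neq\alpha$ at those positions; the difference quotient equals $\frac{\varphi_m(\alpha)-\varphi_m(\beta)}{\alpha-\beta}$, independently of the position. Since the three digit-points are non-collinear, the two admissible choices $\beta\in\{0,1,2\}\setminus\{\alpha\}$ give two distinct slopes, so the difference quotients do not converge and $f_m'(x_0)$ cannot exist. For the graph, since $\varphi_m$ is a bijection the graph meets exactly the three first-rank squares $\sqcap_{(k,\varphi_m(k))}$, and by self-similarity exactly $3^m$ squares of rank $m$; the covering computation from the theorem on the graph of $f$ then applies word for word, giving upper box dimension at most $1$, while projection onto the $x$-axis gives the lower bound $1$, so the Hausdorff-Besicovitch dimension equals $1$.

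The only real subtlety, and the one place I would be careful, is the nowhere-differentiability step: one must check that for \emph{every} ternary-irrational $x_0$ the recurring digit produces two different slopes. This is exactly what non-collinearity guarantees, for if the two quotients $\frac{\varphi_m(\alpha)-\varphi_m(\beta_1)}{\alpha-\beta_1}$ and $\frac{\varphi_m(\alpha)-\varphi_m(\beta_2)}{\alpha-\beta_2}$ coincided, then all three points would lie on a single line through $(\alpha,\varphi_m(\alpha))$, contradicting non-collinearity; hence the conclusion holds for any choice of recurring digit and therefore at every ternary-irrational point. As an alternative, all four conclusions could instead be deduced from the preceding representation $f_m=a_m x+b_m+c_m f_{ij}(x)$ together with the corresponding properties of $f_{ij}$, but the direct transfer of the proofs for $f$ seems cleaner and avoids having to establish the differential behaviour of the auxiliary functions $f_{ij}$ separately.
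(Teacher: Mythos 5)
Your proposal is correct, and it takes a genuinely different route from the paper, which in fact prints no proof of this theorem at all: the statement is offered as a corollary of the preceding (itself unproved) lemma that $f_m=a^{(ij)}_m x+b^{(ij)}_m+c^{(ij)}_m f_{ij}(x)$, so the intended argument is to push each property through this affine relation --- continuity and nowhere differentiability transfer because, for $c^{(ij)}_m\ne 0$, $f_m$ is differentiable at a point iff $f_{ij}$ is, the integral follows by linearity, and the graph of $f_m$ is a bi-Lipschitz image of the graph of $f_{ij}$ under $(x,y)\mapsto(x,ax+b+cy)$; the required properties of the auxiliary functions $f_{01},f_{02},f_{12}$ would in turn be extracted from $f$ via \eqref{ff2}--\eqref{ff4} (note these $f_{ij}$ are \emph{not} among the six $f_m$, since $\varphi_{ij}$ is not a permutation, so their nondifferentiability is not covered by your digit-permutation argument and, on the paper's route, must be inherited from $f$). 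Your direct transfer avoids that detour: you correctly observe that three of the paper's four proofs (continuity at ternary-irrational points, the $3^m$-square covering of the graph, the self-similar integral identity with $\sum_k\varphi_m(k)=3$) are generic in the digit permutation, and you isolate exactly the right hypothesis --- non-collinearity of the points $(i,\varphi_m(i))$, $i\in\{0,1,2\}$, which for a permutation is equivalent to excluding the identity and the reflection --- for the one non-generic step, namely that the two admissible replacement digits $\beta_1,\beta_2$ produce two distinct limit slopes $\frac{\varphi_m(\alpha)-\varphi_m(\beta_s)}{\alpha-\beta_s}$. That observation is sharper than the paper's own proof of nowhere differentiability of $f$ (whose limit notation is informal), and your projection argument for the lower bound $\alpha_0(\Gamma_{f_m})\ge 1$ makes rigorous what the paper merely asserts from self-similarity. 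Both routes work: yours is self-contained and exposes the structural condition; the paper's lemma route, once the lemma is actually proved, yields all four $f_m$ together with the $f_{ij}$ in one stroke from the single function $f$.
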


\end{document}